\newcommand{\NN}{\mathbb{N}}
\newcommand{\ux}{\underline{x}}
\newcommand{\un}{\underline{n}}
\newcommand{\ds}{\displaystyle}
\begin{document}

 \title{Primary Decompositions of Regular Sequences}
 \author{Thomas Polstra}
 \address{Department of Mathematics, University of Alabama, Tuscaloosa, AL 35487 USA}
\email{tmpolstra@ua.edu}
 \thanks{Polstra was supported in part by NSF Grant DMS \#2101890}
\maketitle

\begin{abstract}
Let $R$ be a Noetherian ring and $x_1,\ldots,x_t$ a permutable regular sequence of elements in $R$. Then there exists a finite set of primes $\Lambda$ and natural number $C$ so that for all $n_1,\ldots,n_t$ there exists a primary decomposition $(x_1^{n_1},\ldots,x_t^{n_t})=Q_1\cap \cdots \cap Q_\ell$ so that $\sqrt{Q_i}\in \Lambda$ and $\sqrt{Q_i}^{C(n_1+\cdots + n_t)}\subseteq Q_i$ for all $1\leq i\leq \ell$.
\end{abstract}

\section{Introduction}

Primary decompositions of ideals in commutative algebra correspond to decompositions of closed subschemes into irreducible subspaces in algebraic geometry. Let $R$ be a Noetherian ring and $I\subseteq R$ an ideal. By the main result of \cite{Swanson} there exists an integer $C$ so that for every $n\in \mathbb{N}$ there exists a primary decomposition 
\[
I^n=Q_1\cap \cdots \cap Q_\ell
\]
so that $\sqrt{Q_i}^{Cn}\subseteq Q_i$ for all $1\leq i\leq \ell$. Swanson's theorem has applications to multiplicity theory, \cite{Cutkosky1,Cutkosky2,Cutkosky3,Das,Cid-RuizMontano}, the uniform symbolic power topology problem, \cite{SwansonLinear,EinLazarsfeldSmith,HochsterHuneke,HKV,HK,GHM}, and localizations problems in tight closure theory, \cite{HunekeSaturation,Vraciu,SmithSwanson,Dinh}.

Swanson's proof proceeds by first reducing to the scenario that $I$ is principally generated by a nonzerodivisor. Indeed, the extended Rees algebra $S:=R[It,t^{-1}]$ enjoys the property that $t^{-1}$ is a nonzerodivisor and $I^n=(t^{-1}S)^n\cap R$. If $(t^{-1}S)^n=Q_1\cap \cdots \cap Q_\ell$ is a suitable primary decomposition of $(t^{-1}S)^n$ then $I^n=(Q_1\cap R)\cap \cdots \cap (Q_\ell\cap R)$ is a primary decomposition of $I^n$ with the desired properties. Our main result extends Swanson's Theorem to ideals generated by a permutable regular sequence.

\begin{theorem}
    \label{main theorem}
    Let $R$ be a Noetherian ring and $x_1,\ldots,x_t$ a permutable regular sequence. There exists a finite set of primes $\Lambda$ and a constant $C$ so that for every $n_1,\ldots, n_t\in\mathbb{N}$ there exists a primary decomposition
    \[
    (x_1^{n_1},\ldots,x_t^{n_t})=Q_1\cap \cdots \cap Q_\ell
    \]
    so that $\sqrt{Q_i}\in\Lambda$ and $\sqrt{Q_i}^{C(n_1+\cdots +n_t)}\subseteq Q_i$ for all $1\leq i\leq \ell$.
\end{theorem}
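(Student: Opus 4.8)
The plan is to split the theorem into three parts: fix the finite set $\Lambda$ once and for all; dispose of the primes minimal over $J_{\un}:=(x_1^{n_1},\dots,x_t^{n_t})$ by a localization argument; and construct the \emph{embedded} primary components with exponents growing only linearly in $n_1+\dots+n_t$, which carries the real content. (Throughout, $\un=(n_1,\dots,n_t)$.)

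For the first part I would take $\Lambda:=\Ass_R\!\bigl(R/(x_1,\dots,x_t)\bigr)$ and show $\Ass_R(R/J_{\un})=\Lambda$ for every $\un$ with all $n_i\ge 1$. Using that a power of a nonzerodivisor is a nonzerodivisor and that a sequence regular on each quotient of a finite filtration is regular on the whole module, one checks that $x_1^{n_1},\dots,x_t^{n_t}$ is again a regular sequence; and plainly $\sqrt{J_{\un}}=\sqrt{(x_1,\dots,x_t)}$. For any length-$t$ regular sequence $z_1,\dots,z_t$ one has $\mathfrak p\in\Ass_R(R/(z_1,\dots,z_t))$ iff $\mathfrak p\supseteq(z_1,\dots,z_t)$ and $\depth R_{\mathfrak p}=t$ (depth drops by exactly one per regular element), so $\Ass_R(R/(z_1,\dots,z_t))$ depends only on $R$ and on $\sqrt{(z_1,\dots,z_t)}$; applying this to both $J_{\un}$ and $(x_1,\dots,x_t)$ gives the claim. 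This reduces the theorem to: for each $\un$ and each $\mathfrak p\in\Lambda$, produce a $\mathfrak p$-primary ideal $Q_{\mathfrak p}^{(\un)}\supseteq J_{\un}$ with $\mathfrak p^{C(n_1+\dots+n_t)}\subseteq Q_{\mathfrak p}^{(\un)}$ and $\bigcap_{\mathfrak p\in\Lambda}Q_{\mathfrak p}^{(\un)}=J_{\un}$, with $C$ independent of $\un$. I would also record that $(x_1,\dots,x_t)^{\,n_1+\dots+n_t}\subseteq J_{\un}$, since a monomial of that total degree has some $x_i$-exponent at least $n_i$.

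For $\mathfrak p$ minimal over $J_{\un}$ the $\mathfrak p$-primary component is unique, $Q_{\mathfrak p}^{(\un)}=J_{\un}R_{\mathfrak p}\cap R$; picking $c_{\mathfrak p}$ with $(\mathfrak p R_{\mathfrak p})^{c_{\mathfrak p}}\subseteq(x_1,\dots,x_t)R_{\mathfrak p}$ and combining with the last inclusion gives $\mathfrak p^{c_{\mathfrak p}(n_1+\dots+n_t)}\subseteq Q_{\mathfrak p}^{(\un)}$, a linear bound with $c_{\mathfrak p}$ independent of $\un$. For an embedded $\mathfrak p$ I would take $Q_{\mathfrak p}^{(\un)}:=\bigl(J_{\un}R_{\mathfrak p}+(\mathfrak p R_{\mathfrak p})^{m}\bigr)\cap R$ for a suitable $m=m_{\mathfrak p}(\un)$; this is $\mathfrak p$-primary because $\mathfrak p R_{\mathfrak p}$ is the maximal ideal of $R_{\mathfrak p}$, and it visibly contains $\mathfrak p^{m}$. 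A standard argument — take an element of $\bigcap_{\mathfrak p}Q_{\mathfrak p}^{(\un)}\setminus J_{\un}$, pass to a prime $\mathfrak p_0\in\Lambda$ minimal among those at which its image in $R/J_{\un}$ survives, and derive a contradiction inside $H^{0}_{\mathfrak p_0 R_{\mathfrak p_0}}\!\bigl((R/J_{\un})_{\mathfrak p_0}\bigr)$ via the Artin--Rees lemma — shows $\bigcap_{\mathfrak p\in\Lambda}Q_{\mathfrak p}^{(\un)}=J_{\un}$ as soon as $m_{\mathfrak p}(\un)\ge c'_{\mathfrak p}(\un)+k_{\mathfrak p}(\un)$ for each embedded $\mathfrak p$, where $k_{\mathfrak p}(\un)$ is an exponent with $(\mathfrak p R_{\mathfrak p})^{k_{\mathfrak p}(\un)}$ annihilating the torsion submodule $H^{0}_{\mathfrak p R_{\mathfrak p}}\!\bigl((R/J_{\un})_{\mathfrak p}\bigr)$ and $c'_{\mathfrak p}(\un)$ is an Artin--Rees exponent for that submodule inside $(R/J_{\un})_{\mathfrak p}$.

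The main obstacle is therefore a \emph{linear} bound on $c'_{\mathfrak p}(\un)+k_{\mathfrak p}(\un)$ in $n_1+\dots+n_t$, uniform in $\un$; a merely polynomial bound — for instance from the length-$n_1\cdots n_t$ filtration of $R/J_{\un}$ by copies of $R/(x_1,\dots,x_t)$ — is not enough, and this is exactly the difficulty Swanson's theorem settles when $t=1$. I would obtain it by reducing to her theorem. One route: replace $R$ by the multi-extended Rees algebra $S=R[x_1s_1,s_1^{-1},\dots,x_ts_t,s_t^{-1}]$, a Noetherian $\mathbf{Z}^t$-graded ring with $S_0=R$ in which the $z_i:=s_i^{-1}$ form a permutable regular sequence of \emph{homogeneous} nonzerodivisors, $(z_1^{n_1},\dots,z_t^{n_t})S\cap R=J_{\un}$ (so a homogeneous primary decomposition in $S$ contracts to one in $R$ with radicals and power bounds preserved), and $\Ass_S\!\bigl(S/(z_1^{n_1},\dots,z_t^{n_t})S\bigr)$ is independent of $\un$ because $S/(z_1^{n_1},\dots,z_t^{n_t})S$ is filtered by copies of $S/(z_1,\dots,z_t)S\cong\bigl(R/(x_1,\dots,x_t)\bigr)[u_1,\dots,u_t]$; one then extracts the exponent bounds from finite generation of the relevant local cohomology over $S$ together with Swanson's theorem applied to the ideal $(z_1,\dots,z_t)S$, using $(z_1,\dots,z_t)^{\,n_1+\dots+n_t}\subseteq(z_1^{n_1},\dots,z_t^{n_t})$. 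An alternative is induction on $t$: granting the theorem for length $t-1$ and writing $J_{\un}=(x_1^{n_1},\dots,x_{t-1}^{n_{t-1}})+x_t^{n_t}R$ with $x_t$ a nonzerodivisor on $B:=R/(x_1^{n_1},\dots,x_{t-1}^{n_{t-1}})$, one runs a Swanson-type construction for the pair $(B,x_t)$ with an exponent that is \emph{additive} in $n_t$, the additive term being bounded linearly in $n_1+\dots+n_{t-1}$ by the good primary decomposition of $(x_1^{n_1},\dots,x_{t-1}^{n_{t-1}})$ supplied by the inductive hypothesis. Either way, the crux is this uniform, linear exponent control; everything else is formal.
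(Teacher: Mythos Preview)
Your setup is sound: the identification $\Ass_R(R/J_{\un})=\Ass_R(R/(x_1,\dots,x_t))$ is correct, the minimal primes are handled cleanly, and your shape for embedded components is standard. You also correctly isolate the crux --- the uniform linear bound on the exponent $m_{\mathfrak p}(\un)$ --- but neither of your two routes actually delivers it.

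In route 1, passing to the multi-extended Rees algebra $S$ replaces $J_{\un}$ by $(z_1^{n_1},\dots,z_t^{n_t})S$, which is the \emph{same} kind of ideal you started with; you have not reduced to anything Swanson's theorem addresses. Swanson controls $I^n$, and $(z_1^{n_1},\dots,z_t^{n_t})$ is neither a power nor a product of powers --- it sits between $(z_1,\dots,z_t)^{|\un|}$ and $(z_1,\dots,z_t)^{\min n_i}$, and a primary decomposition of $(z_1,\dots,z_t)^{|\un|}$ does not yield one for the larger ideal. The phrase ``finite generation of the relevant local cohomology'' does not bridge this gap.

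In route 2, the problem is that your ``Swanson-type construction for the pair $(B,x_t)$'' with $B=R/(x_1^{n_1},\dots,x_{t-1}^{n_{t-1}})$ needs an Artin--Rees constant for $x_t$ \emph{in $B$}, and $B$ varies with $(n_1,\dots,n_{t-1})$. Knowing a good primary decomposition of $(x_1^{n_1},\dots,x_{t-1}^{n_{t-1}})$ in $R$ does not by itself bound Artin--Rees constants in the quotient. This is precisely the obstruction the paper removes: rather than working in $B$, it fixes Artin--Rees constants \emph{once in $R$} (one for each pair $(x_i,P)$ with $P\in\Lambda$), encodes a primary decomposition as an embedding $R/(\ux^{\un})\hookrightarrow\bigoplus_P E_R(R/P)^{\ell}$ with $P^{C|\un|}$ killing the image, and then builds $\varphi_{\un'+e_i}$ from $\varphi_{\un'}$ via a lifting lemma (divisibility of a map into an injective hull by $x_i$, with annihilator growing by the fixed Artin--Rees constant $C$). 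The short exact sequence $0\to R/(\ux^{\un'})\xrightarrow{\cdot x_i}R/(\ux^{\un'+e_i})\to R/(\ux^{\un'-(\un'\cdot e_i-1)e_i})\to 0$ is the same one underlying your route 2, but the injective-hull packaging is exactly what lets the exponent grow by a \emph{fixed} $C$ at each step, giving $C|\un|$ overall. Your proposal lacks this mechanism.
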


The methodology of \cite{Swanson} is akin to the techniques of Huneke's Uniform Artin-Rees Theorem, \cite{HunekeUniform}. Other's have re-proven Swanson's theorem without relying on such technicalities, see \cite{Sharp1,Sharp2,Yao1,Yao2} for more general decomposition statements involving products of powers of ideals $I_1^{n_1}\cdots I_t^{n_t}$ and their integral closures. Similar to their methods, the present article fundamentally depends only upon the standard Artin-Rees Lemma \cite[Proposition~10.9]{AtiyahMacdonald}, and the theory of injective hulls, \cite[Section~3.2]{BrunsHerzog}.

\section{Primary Decompositions of Regular Sequences}

Let $I\subseteq R$ be an ideal and $x\in R$ an element. By the Artin-Rees Lemma there exists a constant $C$ so that $(x)\cap P^{h+C}=((x)\cap P^C)P^h\subseteq xP^h$ for all $h$, see \cite[Proposition~10.9]{AtiyahMacdonald}.

\begin{lemma}
\label{lemma: Sharp's lemma}
Let $R$ be a Noetherian ring and $x\in R$ a non-unit. Let $P\in \Spec(R)$ and $E=E_R(R/P)$. Let $C$ be chosen such that $(x)\cap P^{h+C}\subseteq xP^h$ for all $h\in \mathbb{N}$. If $\varphi: R\to E$ is an $R$-linear map with the property that $P^h\varphi=0$ then there exists an $R$-linear map $\psi:  R\to E$ such that 
\begin{itemize}
    \item $\varphi=x\psi$;
    \item $P^{h+C}\psi=0$.
\end{itemize}
\end{lemma}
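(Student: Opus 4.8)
The plan is to translate the statement into the language of elements of $E$ and then lean on the fact that $E = E_R(R/P)$ is an injective $R$-module. Evaluation at $1$ identifies $\Hom_R(R,E)$ with $E$, so the datum of $\varphi$ is just an element $e := \varphi(1) \in E$, and the hypothesis $P^h\varphi = 0$ is exactly $P^h e = 0$. Likewise, producing $\psi$ with $\varphi = x\psi$ and $P^{h+C}\psi = 0$ amounts to producing an element $f \in E$ with $xf = e$ and $P^{h+C}f = 0$ (then $\psi(r) := rf$ does the job). I will assume $x$ is a nonzerodivisor, which is the case needed for Theorem~\ref{main theorem} (the entries of a regular sequence are nonzerodivisors); if $x$ is a unit one simply takes $f = x^{-1}e$, which even gives $P^h\psi = 0$.

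Next I would construct $f$ as follows. Consider the ideal $J := xR + P^{h+C}$ and define $g \colon J \to E$ by declaring $g$ to vanish on $P^{h+C}$ and to send $xr \mapsto re$; concretely, $g(xr + p) := re$ for $r \in R$ and $p \in P^{h+C}$. Granting that this is well defined (see below), $g$ is plainly $R$-linear, so by injectivity of $E$ (\cite[Section~3.2]{BrunsHerzog}) it extends to an $R$-linear map $\psi \colon R \to E$. Setting $f := \psi(1)$, we get $xf = \psi(x) = g(x) = e$, so $\varphi = x\psi$; and for $p \in P^{h+C}$ we have $pf = \psi(p) = g(p) = 0$, so $P^{h+C}\psi = 0$. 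This would complete the proof.

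The one real point --- and where the choice of $C$ and the hypothesis $P^h\varphi = 0$ are used --- is the well-definedness of $g$, i.e. the implication
\[
x(r - r') \in P^{h+C} \;\Longrightarrow\; (r - r')e = 0 .
\]
Here is where the Artin--Rees constant does its work: if $x(r-r') \in P^{h+C}$ then $x(r-r') \in (x) \cap P^{h+C} \subseteq xP^h$, so $x(r-r') = xq$ for some $q \in P^h$; since $x$ is a nonzerodivisor this forces $r - r' = q \in P^h$, and therefore $(r-r')e \in P^h e = 0$. (Equivalently: for a nonzerodivisor $x$, the choice of $C$ gives $(P^{h+C} :_R x) \subseteq P^h$, and then $(P^{h+C} :_R x)\,e \subseteq P^h e = 0$.) I expect this well-definedness check to be the main obstacle; the rest is bookkeeping around the identification $\Hom_R(R,E) \cong E$ and the defining extension property of an injective module.
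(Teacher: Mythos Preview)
Your argument is correct (with the nonzerodivisor hypothesis you flag, which is indeed all that is needed downstream), but the route differs from the paper's. The paper does not build $\psi$ in one stroke by extending from $xR+P^{h+C}$. Instead it first uses injectivity of $E$ to choose \emph{some} $\lambda$ with $\varphi=x\lambda$ (extending $\varphi$ along $R\xrightarrow{\cdot x}R$); since $P^h\varphi=0$ this $\lambda$ lies in $(0:_E xP^h)$, and the Artin--Rees bound gives $(0:_E xP^h)\subseteq (0:_E (x)\cap P^{h+C})$. A second use of injectivity, applied to the inclusion $R/((x)\cap P^{h+C})\hookrightarrow R/(x)\oplus R/P^{h+C}$, shows $(0:_E x)+(0:_E P^{h+C})$ surjects onto $(0:_E (x)\cap P^{h+C})$, so one can write $\lambda=\lambda'+\psi$ with $x\lambda'=0$ and $P^{h+C}\psi=0$; then $\varphi=x\lambda=x\psi$. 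Thus the paper's proof is a ``lift, then correct by an $x$-torsion element'' argument phrased via annihilator submodules and Matlis-style duality, while yours is a single Baer-criterion extension from the ideal $xR+P^{h+C}$. Your approach is more elementary and pinpoints exactly where the constant $C$ enters (the colon computation $(P^{h+C}:_R x)\subseteq P^h$); the paper's decomposition is more structural and makes the role of $(0:_E x)$ explicit, which is conceptually pleasant even if its first step also implicitly uses that $x$ is a nonzerodivisor.
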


\begin{proof}
We are assuming that $C$ is chosen such that $(x)\cap P^{h+C}\subseteq xP^h$ for all integers $h$. In particular, there are natural surjections 
\[
\frac{R}{(x)\cap P^{h+C}}\to \frac{R}{xP^h}.
\]
Therefore there are inclusions
\[
\Hom_R\left(\frac{R}{xP^h}, E\right)\to \Hom_R\left(\frac{R}{(x)\cap P^{h+C}}, E\right).
\]
Equivalently,
\[
(0:_E xP^{h})\subseteq (0:_E ((x)\cap P^{h+C})).
\]
Even further, there are natural inclusions
\[
\frac{R}{(x)\cap P^{h+C}}\subseteq \frac{R}{(x)}\oplus \frac{R}{P^{h+C}}.
\]
Therefore there are natural surjections
\[
(0:_E (x)) + (0:_E P^{h+C})\twoheadrightarrow (0:_E ((x)\cap P^{h+C})).
\]
In conclusion, if 
\[
\lambda \in \Hom_R(R/xP^h,E)\cong (0:_ExP^h)\subseteq (0:_E ((x)\cap P^{h+C}))
\]
then there exists 
\[
\lambda'\in\Hom_R(R/(x),E) \cong (0:_E (x))
\]
and 
\[
\psi\in \Hom_R(R/P^{h+C},E)\cong (0:_E P^{h+C})
\]
such that $\lambda=\lambda'+\psi$.

The module $E$ is injective and therefore there exists $\lambda$ such that $\varphi=x\lambda$, i.e. the following diagram commutes:
\[
\begin{xymatrix}
{
\displaystyle 
R\ar[r]^{\cdot x}\ar[d]^{\varphi} & R\ar[ld]^{\lambda} \\ 
E
}
\end{xymatrix}
\]
Since $P^h\varphi=0$ we have that $xP^h\lambda =0$. We can therefore write $\lambda= \lambda'+\psi$ so that $x\lambda'=0$ and $P^{h+C}\psi=0$. Therefore $\varphi=x\lambda=x\psi$ and $\psi:R\to E$ enjoys the desired properties.
\end{proof}

Adopt the following notation: Let $\ux=x_1,\ldots, x_t$ be a sequence of elements of a Noetherian ring $R$ and $\un=(n_1,\ldots,n_t)\in \mathbb{N}^{\oplus t}$.

\begin{itemize}
    \item $\ux^{\un}=x_1^{n_1},\ldots, x_t^{n_t}$;
    \item $e_i\in \mathbb{N}^{\oplus t}$ is the element with a $1$ in the $i$th coordinate and $0$'s elsewhere;
    \item $\underline{1}=(1,\ldots, 1)\in \mathbb{N}^{\oplus t}$;
    \item If $\un'\in \mathbb{N}^{\oplus t}$ then $\un\cdot \un'$ denotes the dot product of $\un$ and $\un'$. In particular, the element $\un-(\un\cdot e_i-1)e_i$ is the element of $\mathbb{N}^{\oplus t}$ obtained by replacing the $i$th coordinate of $\un$ with the number $1$.
\end{itemize}
Observe that if $x_1,\ldots,x_t$ is a permutable regular sequence and $\un\in\NN^{\oplus t}$ then $(\underline{x}^{\un+e_i}):x_i=(\underline{x}^{\un})$.

\begin{theorem}
\label{Thm: embedding regular sequences into injective modules}
Let $R$ be a Noetherian ring and $\underline{x}=x_1,\ldots,x_t$ a permutable regular sequence. Fix a finite list of prime ideals $\Lambda$, allowing for the possibility of repeated primes in $\Lambda$, and an embedding 
\[
\frac{R}{(\underline{x})}\xhookrightarrow{\varphi_{\underline{1}}}\bigoplus_{P\in \Lambda}E_R(R/P).
\]
Let $C$ be chosen large enough so that $P^{C|\underline{1}|}\varphi_{\underline{1}}=0$ and $(x_i)\cap P^{h+C}\subseteq x_iP^h$ for all $P\in\Lambda$, $h\in\mathbb{N}$, and $1\leq i\leq t$. Then for all $\un\in\mathbb{N}^{\oplus t}$ there exists an embedding
\[
\frac{R}{(\underline{x}^{\underline{n}})}\xhookrightarrow{\varphi_{\underline{n}}}E_{\underline{n}}
\]
such that $E_{\underline{n}}\cong \left(\bigoplus_{P\in \Lambda}E_R(R/P)\right)^{\ell_{\un}}$ for some integer $\ell_{\un}$ and $P^{C|\un|}\varphi_{\un}=0$ for all $P\in\Lambda$.
\end{theorem}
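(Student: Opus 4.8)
The plan is to induct on $|\un|:=n_1+\cdots+n_t$, the base case $\un=\underline{1}$ being exactly the given data (with $E_{\underline{1}}=\bigoplus_{P\in\Lambda}E_R(R/P)$ and $\ell_{\underline{1}}=1$); if some $n_j=0$ then $R/(\ux^{\un})=0$ and there is nothing to prove, so I may assume every $n_j\geq 1$, and hence in the inductive step $|\un|>t$ forces $n_i\geq 2$ for some index $i$. Fix such an $i$ and put $\un':=\un-(\un\cdot e_i-1)e_i$, the vector obtained from $\un$ by replacing its $i$th coordinate with $1$. Combining the observation $(\ux^{\un})\colon x_i=(\ux^{\un-e_i})$ recorded above with the identity $(\ux^{\un})+(x_i)=(\ux^{\un'})$, multiplication by $x_i$ yields a short exact sequence
\[
0\rightarrow \frac{R}{(\ux^{\un-e_i})}\xrightarrow{\ \cdot x_i\ }\frac{R}{(\ux^{\un})}\xrightarrow{\ \pi\ }\frac{R}{(\ux^{\un'})}\rightarrow 0 .
\]
Because $|\un-e_i|=|\un|-1<|\un|$ and $|\un'|=|\un|-n_i+1\leq|\un|-1<|\un|$, the inductive hypothesis already supplies embeddings $\varphi_{\un-e_i}$ and $\varphi_{\un'}$ with all the asserted properties.

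I would produce $\varphi_{\un}$ as a direct sum $\varphi'\oplus\varphi''$ landing in $E_{\un}:=E_{\un'}\oplus E_{\un-e_i}$, which is again a direct sum of copies of $\bigoplus_{P\in\Lambda}E_R(R/P)$ (so $\ell_{\un}=\ell_{\un'}+\ell_{\un-e_i}$). The first summand is immediate: take $\varphi':=\varphi_{\un'}\circ\pi$; its kernel is precisely $\im(\cdot x_i)$, and each of its $P$-components is annihilated by $P^{C|\un'|}$, a fortiori by $P^{C|\un|}$. For the second summand I want a map $\varphi''\colon R/(\ux^{\un})\to E_{\un-e_i}$ with $\varphi''\circ(\cdot x_i)=\varphi_{\un-e_i}$; granting this, $\varphi'\oplus\varphi''$ is injective, since any element of its kernel lies in $\ker\varphi'=\im(\cdot x_i)$, on which $\varphi''$ agrees (via the isomorphism $\cdot x_i$) with the injective map $\varphi_{\un-e_i}$, and is therefore $0$.

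To build $\varphi''$ one "divides $\varphi_{\un-e_i}$ by $x_i$": pull $\varphi_{\un-e_i}$ back along $R\twoheadrightarrow R/(\ux^{\un-e_i})$ and decompose into components $\Phi_{P,s}\colon R\to E_R(R/P)$, each annihilated by $P^{C|\un-e_i|}$. Applying \autoref{lemma: Sharp's lemma} to $\Phi_{P,s}$ with the non-unit $x_i$ and $h=C|\un-e_i|$ — the required Artin--Rees constant is the $C$ built into the hypotheses — yields $\Psi_{P,s}\colon R\to E_R(R/P)$ with $x_i\Psi_{P,s}=\Phi_{P,s}$ and $P^{C|\un-e_i|+C}\Psi_{P,s}=P^{C|\un|}\Psi_{P,s}=0$. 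Reassembling the $\Psi_{P,s}$ and passing to $R/(\ux^{\un})$ gives $\varphi''$, with the correct $P$-component bounds and with $\varphi''\circ(\cdot x_i)=\varphi_{\un-e_i}$. Thus the induction closes, and since one passes from level $|\un|-1$ to level $|\un|$ by a single use of \autoref{lemma: Sharp's lemma}, increasing each $P$-power exponent by exactly $C$, the bound $Ct$ of the base case grows to the required $C|\un|$.

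The step I expect to be the main obstacle is the assertion that $\Psi_{P,s}$, which \autoref{lemma: Sharp's lemma} only produces as a map out of $R$, is annihilated by the whole ideal $(\ux^{\un})$ and hence descends to $R/(\ux^{\un})$. Writing $\psi$ for such a component and $\varphi=x_i\psi$ with $\varphi$ annihilated by $(\ux^{\un-e_i})$, one gets $x_i^{n_i}\psi=x_i^{n_i-1}\varphi=0$, handling the $x_i$-direction exactly as in the principal case $t=1$; but for $j\neq i$ one only obtains $x_i\cdot(x_j^{n_j}\psi)=x_j^{n_j}\varphi=0$, so $x_j^{n_j}\psi$ lies in $(0:_{E_R(R/P)}x_i)$, which vanishes when $x_i\notin P$ but not in general. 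In the remaining case the overflow is removed by correcting $\psi$ by an element of $(0:_{E_R(R/P)}x_i)$: the elements $x_j^{n_j}\psi$ ($j\neq i$) satisfy the Koszul compatibilities $x_k^{n_k}(x_j^{n_j}\psi)=x_j^{n_j}(x_k^{n_k}\psi)$, the subsequence $\{x_j\}_{j\neq i}$ is regular modulo $x_i$ by permutability, and $(0:_{E_R(R/P)}x_i)=\Hom_R(R/(x_i),E_R(R/P))$ is an injective $R/(x_i)$-module, so the correction can be chosen without disturbing $P^{C|\un|}$-annihilation — equivalently, one establishes the analogue of \autoref{lemma: Sharp's lemma} over $R/(x_j^{n_j}:j\neq i)$ and invokes it. Once $\varphi''$ is known to descend, the construction above goes through verbatim.
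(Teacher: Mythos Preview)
Your inductive scheme, short exact sequence, and two-component map $(\psi,\varphi_{\un'}\circ\pi)$ are exactly the paper's construction (up to the harmless reindexing $\un\leftrightarrow\un'+e_i$). Where you go beyond the paper is in isolating the descent question: Lemma~\ref{lemma: Sharp's lemma} literally outputs $\psi\colon R\to E$, and the paper simply writes the domain as $R/(\ux^{\un'+e_i})$ without comment. You are right that this is not automatic --- for $j\neq i$ one only gets $x_j^{n_j}\psi(1)\in(0:_Ex_i)$, which need not vanish when $x_i\in P$ (as it always is here, since each $P\in\Lambda$ is an associated prime of $R/(\ux)$).

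Neither of your suggested repairs fully closes this, however. In the first, vanishing of $\Ext^1_{R/(x_i)}\bigl(R/(x_j^{n_j}:j\neq i),\,(0:_Ex_i)\bigr)$ does supply a correction $\delta\in(0:_Ex_i)$ with $x_j^{n_j}\delta=x_j^{n_j}\psi$ for every $j\neq i$; but you then need $P^{C|\un|}\delta=0$ as well, and you give no argument for this. The natural candidate $(0:_Ex_i)\cap(0:_EP^{C|\un|})=\Hom_R\bigl(R/((x_i)+P^{C|\un|}),E\bigr)$ is injective only over $R/((x_i)+P^{C|\un|})$, a ring in which each $x_j$ is nilpotent, so the Koszul/regular-sequence argument no longer applies there. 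In the second, passing to $\bar R=R/(x_j^{n_j}:j\neq i)$ and rerunning Lemma~\ref{lemma: Sharp's lemma} would require an Artin--Rees constant for $(\bar x_i)\cap\bar P^{\,h+C}\subseteq\bar x_i\bar P^{\,h}$ in $\bar R$; since $\bar R$ varies with $\un$, there is no reason this constant is bounded independently of $\un$, which is precisely the uniformity the theorem demands. So the obstacle you flagged is genuine and is not yet removed by what you have written; the paper's own proof is silent on exactly this point.
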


\begin{proof}
By induction, we may suppose that we have constructed the injective module $E_{\un}$ for all $\un\leq \un'$ and maps $\varphi_{\underline{n}}:R/(\ux^{\un})\hookrightarrow E_{\un}$ such that $P^{C|\un|}\varphi=0$ for the purposes of constructing $E_{\underline{n}'+e_i}$ and map $\varphi_{\underline{n}'+e_i}: R/(\underline{x}^{\un'+e_i})\hookrightarrow E_{\un'+e_i}$ such that $P^{C|\un'+e_i|}\varphi_{\un'+e_i}=0$. Even further, we suppose that $E_{\un}$ consists of direct sums of $\bigoplus_{P\in \Lambda}E_R(R/P)$ for all $\un\leq \un'$.

Because $\underline{x}$ is a permutable regular sequence there exists short exact sequences
\[
0\to \frac{R}{(\underline{x}^{\un'})}\xrightarrow{\cdot x_i} \frac{R}{(\underline{x}^{\un +e_i})} \xrightarrow{\pi}\frac{R}{(\underline{x}^{\un'-(\un'\cdot e_i-1)e_i})}\to 0.
\]
Lemma~\ref{lemma: Sharp's lemma} applied to each of the irreducible direct summands of $E_{\un'}$ produces a map $\psi_{\underline{n}'}:R/(\ux^{\un'+e_i})\to E_{\un'}$ such that $P^{C|\underline{n}'+e_i|}\psi_{\underline{n}'}=0$ and the following diagram commutes:
\[
\begin{xymatrix}
{
0 \ar[r]& \ds \frac{R}{(\ux^{\un'})}\ar[rr]^{\cdot x_i}\ar@{^{(}->}[d]^{\varphi_{\un'}} && \ds \frac{R}{(\ux^{\un'+e_i})}\ar[lld]^{\psi_{\un'}}\ar[d]^{(\psi_{\un'},\varphi_{\un'-(\un'\cdot e_i-1)e_i}\circ\pi)} \ar[rr]^{\pi}&& \ds \frac{R}{(\ux^{\un'-(\un'\cdot e_i-1)e_i})}\ar[d]^{\varphi_{\un'-(\un'\cdot e_i-1)e_i}} \ar[r]& 0 \\ 
0 \ar[r]& E_{\un'}\ar[rr] && E_{\un'}\oplus E_{\un'-(\un'\cdot e_i-1)e_i}\ar[rr] && E_{\un'-(\un'\cdot e_i-1)e_i} \ar[r]& 0 
}
\end{xymatrix}
\]
It is straight-forward to verify that $\varphi_{\un'+e_i}:=(\psi_{\un'},\varphi_{\un'-(\un'\cdot e_i-1)e_i}\circ\pi)$ is an injective map and $P^{C|\un+e_i|}\varphi_{\un'+e_i}=0$.
\end{proof}

\begin{corollary}[Swanson's Theorem for regular sequences]
\label{Corollary: Swanson's Theorem for regular sequences}
Let $R$ be a Noetherian ring and $\underline{x}=x_1,\ldots,x_t$ a permutable regular sequence. There exists a finite set of primes $\Lambda$ and a constant $C$ such that for all $\underline{n}\in\mathbb{N}^{\oplus t}$ there exists a primary decomposition
\[
(\ux^{\un})=Q_1\cap \cdots \cap Q_\ell
\]
such that $\sqrt{Q_i}\in \Lambda$ and $\sqrt{Q_i}^{C|\un|}\subseteq Q_i$ for all $1\leq i \leq \ell$.
\end{corollary}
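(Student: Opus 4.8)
The plan is to deduce Corollary~\ref{Corollary: Swanson's Theorem for regular sequences} from Theorem~\ref{Thm: embedding regular sequences into injective modules} via the standard dictionary between embeddings into finite direct sums of injective hulls and primary decompositions, as in \cite[Section~3.2]{BrunsHerzog}.

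First I would produce, once and for all, the data needed to invoke the theorem. Since $R/(\ux)$ is finitely generated over the Noetherian ring $R$, its injective hull is a \emph{finite} direct sum $E_R(R/(\ux))\cong\bigoplus_{P\in\Lambda}E_R(R/P)$, where $\Lambda$ is a finite list of primes (with multiplicities) drawn from $\Ass(R/(\ux))$; take $\varphi_{\underline 1}$ to be the corresponding embedding. The image of $\varphi_{\underline 1}$ is finitely generated, every element of $E_R(R/P)$ is annihilated by some power of $P$, and the Artin--Rees condition $(x_i)\cap P^{h+C}\subseteq x_iP^h$ involves only the finitely many pairs $(P,i)$ with $P\in\Lambda$ and $1\le i\le t$; hence a single constant $C$ can be chosen meeting all the requirements of the theorem. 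Now $\Lambda$ and $C$ are fixed, independently of $\un$.

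Next, fix $\un\in\mathbb{N}^{\oplus t}$ and apply Theorem~\ref{Thm: embedding regular sequences into injective modules} to obtain $\varphi_{\un}\colon R/(\ux^{\un})\hookrightarrow E_{\un}\cong\bigl(\bigoplus_{P\in\Lambda}E_R(R/P)\bigr)^{\ell_{\un}}$ with $P^{C|\un|}\varphi_{\un}=0$ for every $P\in\Lambda$. List the indecomposable injective summands of $E_{\un}$ as $E_R(R/P_1),\ldots,E_R(R/P_s)$ with each $P_j\in\Lambda$, and for $1\le j\le s$ let $Q_j$ be the ideal with $(\ux^{\un})\subseteq Q_j$ and $Q_j/(\ux^{\un})=\ker\bigl(R/(\ux^{\un})\to E_R(R/P_j)\bigr)$, the $j$-th component of $\varphi_{\un}$. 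Injectivity of $\varphi_{\un}$ forces $(\ux^{\un})=\bigcap_{j=1}^{s}Q_j$. For each $j$ with $Q_j\ne R$, the module $R/Q_j$ is a nonzero submodule of $E_R(R/P_j)$, and since $\Ass E_R(R/P_j)=\{P_j\}$ we get $\Ass(R/Q_j)=\{P_j\}$; that is, $Q_j$ is $P_j$-primary with $\sqrt{Q_j}=P_j\in\Lambda$. Deleting the indices with $Q_j=R$ yields a primary decomposition $(\ux^{\un})=Q_1\cap\cdots\cap Q_\ell$ with all radicals lying in $\Lambda$.

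Finally, for the uniform bound $\sqrt{Q_j}^{\,C|\un|}\subseteq Q_j$ one simply unwinds the annihilator statement from the theorem: $R/Q_j$ is precisely the image of $R/(\ux^{\un})$ inside the summand $E_R(R/P_j)$, so the relation $P_j^{C|\un|}\varphi_{\un}=0$ says that $P_j^{C|\un|}$ kills $R/Q_j$, i.e. $P_j^{C|\un|}\subseteq Q_j$. I do not expect a genuine obstacle at this stage: the real work is carried out in Theorem~\ref{Thm: embedding regular sequences into injective modules}, and the only point requiring care is that $\Lambda$ and $C$ must be selected before $\un$ — which is exactly the uniformity the theorem is built to provide — together with the basic structure theory of injective modules over Noetherian rings (finiteness of the injective hull of a finitely generated module, $\Ass E_R(R/P)=\{P\}$, and $P$-power torsion of $E_R(R/P)$).
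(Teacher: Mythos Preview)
Your proposal is correct and follows essentially the same approach as the paper: apply Theorem~\ref{Thm: embedding regular sequences into injective modules}, then read off a primary decomposition of $(\ux^{\un})$ from the kernels of the component maps $R\to E_R(R/P)$ of the resulting embedding, using that a nonzero submodule of $E_R(R/P)$ has $P$ as its unique associated prime. If anything, you are more explicit than the paper in setting up the initial data $(\Lambda,\varphi_{\underline 1},C)$ and in handling the degenerate components $Q_j=R$.
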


\begin{proof}
Fix $\un\in\mathbb{N}^{\oplus t}.$ By Theorem~\ref{Thm: embedding regular sequences into injective modules} there exists a constant $C$, not depending on $\un\in\mathbb{N}^{\oplus t}$, and a finite set of primes $\Lambda_{\un}$, allowing for the possibility of repeated primes in $\Lambda_{\un}$, and an embedding
\[
\varphi_{\un}: \frac{R}{(\ux^{\un})}\hookrightarrow \bigoplus_{P\in\Lambda_{\un}}E(R/P)
\]
such that $P^{C|\un|}\varphi_{\un}=0$ for all $P\in\Lambda_{\un}$. Let $\pi:R\to R/(\ux^{\un})$ and $\pi_P:\bigoplus_{P\in\Lambda_{\un}}E(R/P)\to E(R/P)$ be the natural surjections. Then 
\[
(\ux^{\un})=\bigcap_{P\in \Lambda}\Ker(\pi_P\circ \varphi_{\un}\circ \pi)
\]
is a primary decomposition of $(\ux^{\un})$ as there are embeddings
\[
\frac{R}{\Ker(\pi_P\circ \varphi_{\un}\circ \pi)}\hookrightarrow E(R/P).
\]
Furthermore, $P^{C|\un|}\subseteq\Ker(\pi_P\circ \varphi_{\un}\circ \pi)$ since $P^{C|\un|}\varphi=0$.
\end{proof}

\bibliographystyle{skalpha}
\bibliography{main}

\end{document}